\numberwithin{equation}{section}
\newtheorem{theorem}{Theorem}[section]
\newtheorem{lemma}[theorem]{Lemma}
\newtheorem{proposition}[theorem]{Proposition}
\newtheorem{observation}[theorem]{Observation}
\theoremstyle{definition}
\newtheorem{definition}[theorem]{Definition}
\newtheorem{example}[theorem]{Example}
\newtheorem{coexample}[theorem]{Counter Example}
\newenvironment{remark}[1][Remark.]{\begin{trivlist}
\item[\hskip \labelsep {\bfseries #1}]  }{ \end{trivlist}}
\newcommand{\Id}{\mathbbmss{1}}
\DeclareMathOperator{\Vect}{Vect}
\font\black=cmbx10 \font\sblack=cmbx7 \font\ssblack=cmbx5 \font\blackital=cmmib10  \skewchar\blackital='177
\font\sblackital=cmmib7 \skewchar\sblackital='177 \font\ssblackital=cmmib5 \skewchar\ssblackital='177
\font\sanss=cmss10 \font\ssanss=cmss8 
\font\sssanss=cmss8 scaled 600 \font\blackboard=msbm10 \font\sblackboard=msbm7 \font\ssblackboard=msbm5
\font\caligr=eusm10 \font\scaligr=eusm7 \font\sscaligr=eusm5  \font\fraktur=eufm10
\font\sfraktur=eufm7 \font\ssfraktur=eufm5 
\font\bsymb=cmsy10 scaled\magstep2
\def\all#1{\setbox0=\hbox{\lower1.5pt\hbox{\bsymb
       \char"38}}\setbox1=\hbox{$_{#1}$} \box0\lower2pt\box1\;}
\def\exi#1{\setbox0=\hbox{\lower1.5pt\hbox{\bsymb \char"39}}
       \setbox1=\hbox{$_{#1}$} \box0\lower2pt\box1\;}
\def\tx#1{{\fam0\relax#1}}
\def\sss#1{{\fam\ssfam\relax#1}}
\def\hpb#1{\setbox0=\hbox{${#1}$}
    \copy0 \kern-\wd0 \kern.2pt \box0}
\def\vpb#1{\setbox0=\hbox{${#1}$}
    \copy0 \kern-\wd0 \raise.08pt \box0}
\def\pmb#1{\setbox0\hbox{${#1}$} \copy0 \kern-\wd0 \kern.2pt \box0}
\def\pmbb#1{\setbox0\hbox{${#1}$} \copy0 \kern-\wd0
      \kern.2pt \copy0 \kern-\wd0 \kern.2pt \box0}
\def\pmbbb#1{\setbox0\hbox{${#1}$} \copy0 \kern-\wd0
      \kern.2pt \copy0 \kern-\wd0 \kern.2pt
    \copy0 \kern-\wd0 \kern.2pt \box0}
\def\pmxb#1{\setbox0\hbox{${#1}$} \copy0 \kern-\wd0
      \kern.2pt \copy0 \kern-\wd0 \kern.2pt
      \copy0 \kern-\wd0 \kern.2pt \copy0 \kern-\wd0 \kern.2pt \box0}
\def\pmxbb#1{\setbox0\hbox{${#1}$} \copy0 \kern-\wd0 \kern.2pt
      \copy0 \kern-\wd0 \kern.2pt
      \copy0 \kern-\wd0 \kern.2pt \copy0 \kern-\wd0 \kern.2pt
      \copy0 \kern-\wd0 \kern.2pt \box0}
\mathchardef\za="710B  
\mathchardef\zb="710C  
\mathchardef\zg="710D  
\mathchardef\zd="710E  
\mathchardef\zve="710F 
\mathchardef\zz="7110  
\mathchardef\zh="7111  
\mathchardef\zvy="7112 
\mathchardef\zi="7113  
\mathchardef\zk="7114  
\mathchardef\zl="7115  
\mathchardef\zm="7116  
\mathchardef\zn="7117  
\mathchardef\zx="7118  
\mathchardef\zp="7119  
\mathchardef\zr="711A  
\mathchardef\zs="711B  
\mathchardef\zt="711C  
\mathchardef\zu="711D  
\mathchardef\zvf="711E 
\mathchardef\zq="711F  
\mathchardef\zc="7120  
\mathchardef\zw="7121  
\mathchardef\ze="7122  
\mathchardef\zy="7123  
\mathchardef\zf="7124  
\mathchardef\zvr="7125 
\mathchardef\zvs="7126 
\mathchardef\zf="7127  
\mathchardef\zG="7000  
\mathchardef\zD="7001  
\mathchardef\zY="7002  
\mathchardef\zL="7003  
\mathchardef\zX="7004  
\mathchardef\zP="7005  
\mathchardef\zS="7006  
\mathchardef\zU="7007  
\mathchardef\zF="7008  
\mathchardef\zW="700A  
\mathchardef\zC="7009  
\newcommand{\be}{\begin{equation}}
\newcommand{\ee}{\end{equation}}
\newcommand{\bea}{\begin{eqnarray}}
\newcommand{\eea}{\end{eqnarray}}
\def\*{{\textstyle *}}
\newcommand{\R}{{\mathbb R}}
\newcommand{\Z}{{\mathbb Z}}
\newcommand{\s}{{\textstyle *}}
\def\Sec{\sss{Sec}}
\def\Vect{\sss{Vect}}
\def\sT{{\sss T}}
\def\xi{\tx{i}}
\def\s*{{\scriptstyle *}}
\newcommand{\beas}{\begin{eqnarray*}}
\newcommand{\eeas}{\end{eqnarray*}}
\title{Para-associative Algebroids} 
\author{Andrew James Bruce } 
   \email{andrewjamesbruce@googlemail.com}
   \date{\today}
\begin{document}
 \maketitle
\vspace{-20pt}
\begin{abstract}{\noindent We introduce para-associative algebroids as vector bundles whose sections form a ternary algebra with a generalised form of associativity.  We show that a necessary and sufficient condition for local triviality is the existence of a differential connection, i.e., a connection that satisfies the Leibniz rule over the ternary product. }\\

\noindent {\Small \textbf{Keywords:} Ternary Algebras;~ Vector Bundles;~Generalised Associativity}\\
\noindent {\small \textbf{MSC 2020:} 17A30;~20N10 }
\end{abstract}
\tableofcontents
\section{Introduction}
While it is true that physics and differential geometry are usually formulated in terms of binary operations, such as groups and (non-)associative algebras, mathematical structures with a ternary composition law have periodically entered the arena. Ternary algebras with a natural generalisation of associativity have appeared in mathematics and mathematical physics, a short list of works include \cite{Abramov:2009,Bazunova:2004,Bruce:2022,Carlsson:1976,Hestenes:1962,Kerner:2008,Kerner:2018,Zettl:1983}. The basic idea is to replace a binary product in the definition of an algebra with a ternary operation. There are two kinds of direct generalisation of the associative property, using the nomenclature of Kerner \cite{Kerner:2018}:
\begin{description}
 \setlength\itemsep{1em}
\item[A-associativity] $[[x_1, x_2, x_3], x_4, x_5] = [x_1,[x_2, x_3, x_4], x_5] = [x_1, x_2, [x_3, x_4, x_5] ]$,
\item[B-associativity] $[[x_1, x_2, x_3], x_4, x_5] = [x_1,[x_4, x_3, x_2], x_5] = [x_1, x_2, [x_3, x_4, x_5] ]$.
\end{description}
While A-associativity seems natural,  B-associativity is far more common when it comes to examples. The reason for this is that semiheap and heaps satisfy  B-associativity, which is more commonly known as para-associativity. We remark that  Pr\"{u}fer \cite{Prufer:1924} (1924) and independently  Baer \cite{Baer:1929} (1929) introduced the notion of a heap as a set with a ternary operation that satisfies the para-associativity property. For details of heaps, semiheaps and related structures the reader can consult Hollings \& Lawson \cite{Hollings:2017}. In this note we will restrict attention to ternary algebras that satisfy the para-associative property, we will refer to such algebras as para-associative algebras (see Definition \ref{def:ParaAlgbra}). Ternary groups, understood as a set with a ternary product satisfying type A-associativity were introduced by Dörnte \cite{Dörnte:1928} (1928). Quite general polyadic algebraic structures have been studied by Duplij \cite{Duplij:2022}. \par 
Horizontal categorification or oidification is the process of passing from a mathematical structure with a single object to a mathematical structure with multiple objects. A well-known example is that of a groupoid as a group with multiple objects, or multiple identities.  Loosely, passing from an algebra to an algebroid is replacing the underlying vector space with a module in the definition of an algebra. As we are interested in differential geometry, we restrict to $C^\infty(M)$-modules of sections of vector bundles over a smooth manifold $M$.  For example, a Lie algebroid can be seen (in part) as a Lie algebra in which the elements have been replaced by sections of a vector bundle (see \cite{Pradines:1967}). Similarly, an associative algebroid is, loosely, an associative algebra in which the elements are now sections of a vector bundle. As a multiple object generalisation of an algebra one need only observe that each fibre of an algebroid is an algebra, of the required type. For instance, the fibres of a Lie algebroid are all Lie algebras. Other examples in the literature include   Courant algebroids, Filippov algebroids, Jacobi algebroids and Leibniz algebroids (see \cite{Liu:1997,Grabowski:2000,Grabowski:2001,Ibáñez:1999}, respectively). In these examples, there is no local trivialisation condition, i.e., there is no reason why all the fibre algebras should be isomorphic. \par 
Associative algebra bundles, essentially associative algebroids with isomorphic fibre algebras, are common in physics and differential geometry. Tensor-algebra bundles, exterior bundles, symmetric bundles, Clifford bundles associated with a vector bundle, commonly the tangent bundle, are all well-known examples.  \par 
In this note, we consider para-associative algebroids as vector bundles whose space of section is a para-associative $C^\infty(M)$-algebra (see Definition \ref{def:ParaAlgBroid}). Such bundles are ternary objects in the category of vector bundles, but we will avoid extensive use of category theory and take a geometric approach. The reader may consult Mac Lane \cite{MacLane:1988} for an overview of category theory.  A canonical and guiding example of a para-associative algebroid, as we shall see, is the tangent bundle of a Riemannian manifold. \par 
We establish several results:
\begin{enumerate}
\setlength\itemsep{1em}
\item The para-associative algebras on the fibres are well-defined (Proposition \ref{prop:FibWellDef}).
\item By selecting a section of a para-associative algebroid, one can construct an associative algebroid (Proposition \ref{prop:ParaAssToAssAlg}).
\item If the para-associative algebroid can be equipped with a differential connection (see Definition \ref{def:DiffCon}), then we have local triviality, i.e., all the fibre para-associative algebras are isomorphic (see Theorem \ref{thm:DiffConBundle} and Proposition \ref{prop:ParaBunConAlways}). The existence of a differential connection is a necessary and sufficient condition for local triviality (see  \ref{obs:SuffNessCon}). These results generalise those of Canlubo \cite{Canlubo:2019}, who gave similar statements for the associative case.
\end{enumerate}
\textbf{Conventions:} By manifold we mean a smooth manifold that is real, finite-dimensional, Hausdorff, and second-countable. 
\section{Ternary Algebras on Vector Bundles}
\subsection{Para-associative Algebras}
For completeness, we will quickly recall the notion of a para-associative algebra. As we will be focusing on real geometry, we will restrict attention to real algebraic structures.
\begin{definition}\label{def:ParaAlgbra}
A \emph{(real) ternary para-associative algebra} consists of the following data:
\begin{enumerate}
\item A real vector space $\mathcal{A}$.
\item A ternary product 
     $$[-,-,-] : \mathcal{A} \times \mathcal{A} \times \mathcal{A} \longrightarrow \mathcal{A}\,,$$
     that 
     \begin{enumerate}
     \item is $\R$-linear in each entry, 
     \item is para-associative, i.e.,
     $$\big[ [x_1,x_2,x_3] , x_4,x_5 \big] = \big [ x_1,[x_4,x_3,x_2],x_5\big] =  \big[ x_1,x_2,[x_3 , x_4,x_5] \big]\, ,$$
     for all $x_i \in \mathcal{A}$.
     \end{enumerate}
\end{enumerate}
\end{definition}
\begin{remark}
In the complex setting, examples show us that we should take the ternary product as being conjugate-linear in the second argument.
\end{remark}
We will denote a (real) ternary para-associative algebra as a pair $(\mathcal{A}, [-,-,-])$ or just $\mathcal{A}$ if there is no need to explicitly present the ternary product. For brevity, we will generally refer to para-associative algebras.\par
\begin{definition}
Let $(\mathcal{A}, [-,-,-])$ and $(\mathcal{A}', [-,-,-]')$. A \emph{para-associative algebra homomorphism} is a linear map $\phi :\mathcal{A} \rightarrow \mathcal{A}'$ such that 
$$\phi [x_1,x_2,x_3] = [\phi(x_1),\phi(x_2),\phi(x_3)]\,,$$
for all $x_i \in \mathcal{A}$.
\end{definition}
In other words, a para-associative algebra is an `algebra' in which the binary product is replaced with a semiheap operation. We will say that a para-associative algebra is \emph{commutative} if $[x,y,z] = [z,y,x]$ for all elements.  An element $e$ of a para-associative algebra is said to be a \emph{biunit} if $[e,e,x]= [x,e,e] = x$ for all $x$.  If every element of a para-associative algebra is a biunit, then the ternary product is a heap operation. Due to $\R$-linearity, it is clear that $0$ is a (unique) absorbing element.  In general, there maybe (left/central/right) non-zero divisors, defined in the obvious way.  
\begin{example}\label{exa:ParaAssBiLin}
Consider a real vector space $\mathit{V}$ (not necessarily finite dimensional) equipped with a symmetric bilinear form $B :\mathit{V}  \times \mathit{V} \rightarrow \R $. We can construct a para-associative algebra on $\mathit{V}$ by defining 
$$[u,v,w]:= B(u,v)w\,,$$
for all $u,v$ and $w \in \mathit{V}$. Suppose we have another real vector space $\mathit{V}'$ equipped with a symmetric bilinear form $B'$. A linear map $\phi : \mathit{V}\rightarrow \mathit{V}'$, such that $B(u,v) = B'(\phi(u), \phi(v))$, is para-associative algebra homomorphism. The opposite para-associative algebra on $\mathit{V}$ is  defined as $[u,v,w]^{\mathrm{op}} := [w,v,u] = B(w,v)u = u B(v,w)$. If $B$ is degenerate, we have a non-trivial kernel 
$$\ker(B) := \left\{u \in \mathit{V}~|~ B(u,v)=0, ~\textnormal{for all}~v \in \mathit{V} \right\}\,.$$
Then any $u \in \ker(B)$ is a left and central divisor, i.e., $[u,v,w] =0$ and $[v,u,w]=0$ for all $v,w  \in \mathit{V}$.
\end{example}
\begin{coexample} 
The symmetry property of the bi-linear form in Example \ref{exa:ParaAssBiLin} is essential for the para-associative property to hold. Thus, if instead we consider an antisymmetric bi-linear form $B$ on a vector space $\mathit{V}$, and define $[u,v,w]:= B(u,v)w$, then the `middle' term of the para-associative law does not hold.
\end{coexample}
\begin{example}\label{exm:ParaAssR2}
Take $\R^2$ and its standard basis $\{e_1, e_2\}$. We can define a commutative heap on this basis by considering the heap associated with the cyclic group $C_2$ by setting $e_1 =  \Id$ and $e_2 e_2 = e_1$. The heap is then given by
\begin{align*}
& [e_1,e_1,e_1] = e_1\,, \qquad [e_1,e_1,e_2] = e_2\,, \qquad [e_2,e_1,e_1] = e_2\,,\qquad [e_2,e_1,e_2] = e_1\,,\\
& [e_1,e_2,e_1] = e_2\,, \qquad [e_1,e_2,e_2] = e_1\,, \qquad [e_2,e_2,e_1] = e_1\,,\qquad [e_2,e_2,e_2] = e_2\,.
\end{align*}
The para-associative algebra is then defined using this basis, i.e., 
$$[u,v,w] := u^iv^jw^k [e_k, e_j , e_i]\,,$$
where we have used the Einstein summation convention.  Note that choosing $e_2 = \Id$ and $e_1 e_1 = e_2$ gives the same heap. Thus, the linear map given by $\phi(e_1) = e_2$  and $\phi(e_2) = e_1$, is an isomorphism of para-associative algebras. As the para-associative algebra is commutative, $[u,v,w]^{\mathrm{op}} := [w,v,u] = [u,v,w]$.
\end{example}
\subsection{Para-associative Algebroids; Definition and First Examples}
Passing from para-associative algebra to an algebroid, we make the following natural definition.
\begin{definition}\label{def:ParaAlgBroid}
A \emph{para-associative algebroid} is a vector bundle $\pi : E \rightarrow M$, such that the space of sections $\Sec(E)$ is a  para-associative $C^\infty(M)$-algebra. Let $E$ and $E'$ be para-associative algebroids over the same base manifold $M$. A \emph{para-associative algebroid homomorphism} is a vector bundle morphism (over the identity) $\phi : E \rightarrow E'$ that the induced map of sections $\phi^\#: \Sec(E) \rightarrow \Sec(E')$ is a para-associative algebra homomorphism. 
\end{definition}
\begin{remark}
We could refer to para-associative algebroids as \emph{weak para-associative algebra bundles} following the nomenclature of weak associative algebra bundles (see for example \cite{Canlubo:2019}). However, for brevity and our perspective of horizontal categorification, we prefer algebroids.
\end{remark}
To spell some of this out, we have a para-associative product that is $C^\infty(M)$-linear on the sections of a vector bundle, i.e., 
\begin{equation}
\big[ [u_1,u_2,u_3] , u_4,u_5 \big] = \big [ u_1,[u_4,u_3,u_2],u_5\big] =  \big[ u_1,u_2,[u_3 , u_4,u_5] \big]\, ,
\end{equation}
and
\begin{equation}\label{eqn:ModStru}
 [fu, v, w] =[u, fv, w] = [u, v, fw] = f[u, v, w]\,,
 \end{equation}
for all $u_i, u, v,w \in \Sec(E)$ and $f\in C^{\infty}(M)$.  Note that the zero section is an absorbing element, i.e., the zero of the algebra.\par 
On every fibre of a para-associative algebroid there is the structure of a para-associative algebra which is constructed as follows: Let $m \in M$ and let $\alpha, \beta$ and $\gamma \in E_m$ be vectors in the fibre over $m$. We can choose three sections $u,v$ and $w\in \Sec(E)$ such that $u(m)= \alpha$, $v(m) = \beta$ and $w(m)= \gamma$. We then define 
\begin{equation}\label{eqn:FibreAlg}
[\alpha, \beta, \gamma] := [u,v,w](m)\,.
\end{equation}
We must establish that this definition is independent of the choice of $u,v$ and $w \in \Sec(E)$. Recall that two sections have the same germ at a point if they agree on a neighbourhood of that point. Let $\bar{w}\in \Sec(E)$ be another section such that $\bar{w}(m) = \gamma$. Then we know that there is an open neighbourhood $U \subseteq M$ of $m\in M$, such that $w|_U = \bar{w}|_U$. In other words, the two sections locally look the same. Thus, to establish the independence of the ternary product on a fibre of the choice of sections, we need to establish that the product is local. The locality is almost immediate from \eqref{eqn:ModStru}.
\begin{lemma}
Let $(E, [-,-,-])$ be a para-associative algebroid and let $w \in \Sec(E)$ be a section that vanishes on a neighbourhood $U\subseteq M$ of a point $m \in M$. Then
$$[u,v,w](m)=0, \qquad [w,u,v](m) =0, \qquad [v,w,u](m)=0,$$
for all $u,v \in \Sec(E)$.
\end{lemma}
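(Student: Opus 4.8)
The plan is to reduce everything to the $C^\infty(M)$-linearity recorded in \eqref{eqn:ModStru}, which allows a smooth function to be extracted from any of the three slots of the ternary product. The whole argument rests on one observation: a section vanishing on $U$ can be rewritten as a smooth function times itself, where that function can be arranged to vanish at the point $m$.

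First I would construct the auxiliary function. Since $M$ is a smooth manifold, and hence admits smooth bump functions, choose $\chi \in C^\infty(M)$ with $0 \le \chi \le 1$, with $\chi(m) = 1$, and with $\operatorname{supp}(\chi) \subseteq U$. Set $g := 1 - \chi \in C^\infty(M)$, so that $g(m) = 0$ while $g \equiv 1$ on $M \setminus U$. The key (and essentially only substantive) step is then to check that $g \cdot w = w$ as sections of $E$: on $U$ we have $w = 0$ by hypothesis, so both sides vanish there, while on $M \setminus U$ we have $g = 1$, so $g w = w$ there as well. Hence $g w = w$ on all of $M$.

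With this in hand the three identities are immediate. For the first, \eqref{eqn:ModStru} gives
$$[u,v,w] = [u,v,g w] = g\,[u,v,w]\,,$$
and evaluating at $m$ yields $[u,v,w](m) = g(m)\,[u,v,w](m) = 0$ since $g(m)=0$. The remaining two identities follow in exactly the same way, extracting $g$ from the first slot via $[g w, u, v] = g\,[w,u,v]$ and from the middle slot via $[v, g w, u] = g\,[v,w,u]$, then evaluating at $m$. I do not expect a genuine obstacle here; the only points requiring care are the existence of the bump function $\chi$ (guaranteed by the smoothness and paracompactness assumptions on $M$) and the fact that \eqref{eqn:ModStru} indeed permits extraction of a scalar from every one of the three arguments, not merely the first, which is precisely what makes the three cases symmetric.
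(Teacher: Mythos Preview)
Your proof is correct and is essentially the paper's argument: both pick a bump function $\chi$ supported in $U$ with $\chi(m)=1$ and exploit the $C^\infty(M)$-linearity \eqref{eqn:ModStru}. The only cosmetic difference is that the paper works with $\chi$ directly, observing that $\chi w \equiv 0$ so that $0=[u,v,\chi w]=\chi\,[u,v,w]$, whereas you work with $g=1-\chi$ and the identity $gw=w$; these are the same computation rearranged.
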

\begin{proof}
We will prove the statement $[u,v,w](m)=0$ under the conditions of the lemma as the other statements follow from the same argument. \par 
Given the data of the lemma, we can pick a bump function $\varphi$ such that $\mathrm{supp}(\varphi) \subset U$ and $\varphi(m)=1$. Then $\varphi w$ vanishes identically as the section is zero on $U$ and $\varphi =0$ outside of $U$.  Then
$$[u,v, \varphi w] =0 = \varphi [u,v,w]\,.$$
Thus, evaluating at $m$ we obtain $[u,v,w](m)=0$.
\end{proof}
As a direct consequence, we have the following.
\begin{proposition}\label{prop:FibWellDef}
Let $(E, [-,-,-])$ be a para-associative algebroid. The para-associative algebra on any fibre $E_m$ ($m\in M$) defined as
$$ [\alpha, \beta, \gamma] := [u,v,w](m)\,,$$
is independent of the choice of sections $u,v$ and $w\in \Sec(E)$. 
\end{proposition}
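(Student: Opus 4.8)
The plan is to upgrade the locality established in the preceding Lemma into genuine pointwise well-definedness. The Lemma already shows that the ternary product at $m$ depends only on the germs of its three arguments at $m$: if two sections agree on a neighbourhood of $m$, their difference vanishes there, and by $\R$-linearity combined with the Lemma the two products agree at $m$. This germ-locality holds in each of the three slots. However, it is not yet enough, since two sections $w, \bar{w} \in \Sec(E)$ with $w(m) = \bar{w}(m) = \gamma$ need not share the same germ at $m$. The genuine obstacle is therefore to bridge the gap from equality of \emph{values} at $m$ to equality of \emph{products} at $m$, and this is where the $C^\infty(M)$-multilinearity \eqref{eqn:ModStru} enters decisively.

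First I would fix a local frame $\{e_a\}_{a=1}^{r}$ of $E$ over an open neighbourhood $U$ of $m$, where $r = \Rank(E)$. Choosing a bump function $\varphi$ with $\varphi \equiv 1$ on a smaller neighbourhood $V \subseteq U$ and $\mathrm{supp}(\varphi) \subset U$, the sections $\varphi e_a$ and the components $\varphi w^a$ (writing $w = w^a e_a$ on $U$ with the Einstein convention) extend by zero to global smooth sections $s_a \in \Sec(E)$ and global smooth functions $f^a \in C^\infty(M)$. On $V$ one has $f^a s_a = w^a e_a = w$, so the global section $f^a s_a$ shares the germ of $w$ at $m$. Performing the same construction for $u$ and $v$ produces global functions $g^b, h^c$ and the same sections $s_b$.

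Next, by the germ-locality noted above, $[u,v,w](m) = [g^b s_b, h^c s_c, f^a s_a](m)$. Expanding by $\R$-linearity and the $C^\infty(M)$-linearity \eqref{eqn:ModStru} in each slot, this equals $g^b(m)\, h^c(m)\, f^a(m)\, [s_b, s_c, s_a](m)$, summed over $b, c, a$. Since $g^b(m), h^c(m), f^a(m)$ are precisely the frame components of $\alpha = u(m)$, $\beta = v(m)$, $\gamma = w(m)$, and the quantities $[s_b, s_c, s_a](m)$ depend only on the fixed frame and the algebroid structure, the whole expression depends on $u, v, w$ only through the values $\alpha, \beta, \gamma$. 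Hence any other admissible triple $\bar{u}, \bar{v}, \bar{w}$ yields the same value, which proves independence.

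I would expect the principal care to lie in the globalisation step: the frame $\{e_a\}$ and the components $w^a$ live only on $U$, so the multilinearity relation \eqref{eqn:ModStru}, stated for global sections and global functions, cannot be applied directly. The bump-function extension resolves this, but one must verify that $f^a s_a$ genuinely agrees with $w$ on the whole neighbourhood $V$ (not merely at $m$), so that germ-locality is applicable, and that multiplying by $\varphi$ does not disturb the relevant values at $m$, which holds because $\varphi(m) = 1$.
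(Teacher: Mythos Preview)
Your argument is correct and, in fact, more careful than the paper's. The paper treats the proposition as an immediate corollary of the preceding Lemma, having asserted in the paragraph before it that if $\bar w(m)=\gamma=w(m)$ then ``there is an open neighbourhood $U\subseteq M$ of $m$ such that $w|_U=\bar w|_U$''. That assertion is false in general (same value at a point does not force the same germ), so the paper's reduction to germ-locality alone leaves exactly the gap you identify. Your route---pass to a local frame, globalise frame and coefficients with a bump function, and then use the $C^\infty(M)$-trilinearity \eqref{eqn:ModStru} to factor out the scalar components at $m$---is the standard tensoriality argument and closes this gap cleanly. What the paper's approach buys is brevity (it treats the well-known ``$C^\infty$-linear $\Rightarrow$ pointwise'' step as understood); what your approach buys is a proof that actually stands on its own without that tacit appeal.
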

From the definition of a para-associative algebroid, there is no reason why the para-associative algebras on each fibre should be isomorphic. 
Let us take a trivialising open $U \subset M$, i.e., $\pi^{-1}(U) \rightarrow U \times \mathit{V}$,  then  any section of para-associative algebroid can be locally written using a (local) basis $\sigma_\alpha$,
$$u|_U = u^\alpha \sigma_\alpha\,,$$
where $u^\alpha \in C^\infty(U)$. The local property of the ternary product and the $C^\infty(M)$-algebra property allows us to write
\begin{equation}
[u,v,w]|_U = u^\alpha v^\beta w^\gamma \,[\sigma_\alpha, \sigma_\beta, \sigma_\gamma] = u^\alpha v^\beta w^\gamma \,C^\lambda_{\alpha \beta \gamma}\sigma_\lambda\,.
\end{equation} 
We refer to $C^\lambda_{\alpha \beta \gamma}$ as the \emph{structure functions} of the para-associative algebroid. A quick calculation shows that the para-associative property leads to the following.
\begin{proposition}
Let $(E, [-,-,-])$ be a para-associative algebroid. The structure functions satisfy
$$C^\eta_{\alpha \beta \gamma} C^\lambda_{\eta \delta \epsilon} = C^\eta_{\delta  \gamma \beta} C^\lambda_{\alpha \eta \epsilon} = C^\eta_{\gamma \delta \epsilon} C^\lambda_{\alpha \beta \eta}\,.$$
\end{proposition}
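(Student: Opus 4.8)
The plan is to feed the five local basis sections $\sigma_\alpha,\sigma_\beta,\sigma_\gamma,\sigma_\delta,\sigma_\epsilon$ into the para-associative identity and then read off the relation among the structure functions by comparing coefficients. Everything takes place over a fixed trivialising open set $U$, where the $\sigma_\lambda$ form a $C^\infty(U)$-basis of $\Sec(E)|_U$ and the structure functions $C^\lambda_{\alpha\beta\gamma}\in C^\infty(U)$ are defined by $[\sigma_\alpha,\sigma_\beta,\sigma_\gamma] = C^\lambda_{\alpha\beta\gamma}\sigma_\lambda$. The only two inputs are the para-associative law applied to these basis sections and the $C^\infty(M)$-linearity recorded in \eqref{eqn:ModStru}.

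First I would expand the outer (left) nesting. Writing $[\sigma_\alpha,\sigma_\beta,\sigma_\gamma] = C^\eta_{\alpha\beta\gamma}\sigma_\eta$ and substituting into the first slot, \eqref{eqn:ModStru} lets me pull the scalar $C^\eta_{\alpha\beta\gamma}$ out of the outer bracket, so that $\big[[\sigma_\alpha,\sigma_\beta,\sigma_\gamma],\sigma_\delta,\sigma_\epsilon\big] = C^\eta_{\alpha\beta\gamma}\,C^\lambda_{\eta\delta\epsilon}\,\sigma_\lambda$. Repeating this for the middle nesting, where para-associativity dictates the reversed middle triple $[\sigma_\delta,\sigma_\gamma,\sigma_\beta]$, yields $C^\eta_{\delta\gamma\beta}\,C^\lambda_{\alpha\eta\epsilon}\,\sigma_\lambda$, and for the right nesting $\big[\sigma_\alpha,\sigma_\beta,[\sigma_\gamma,\sigma_\delta,\sigma_\epsilon]\big]$ one obtains $C^\eta_{\gamma\delta\epsilon}\,C^\lambda_{\alpha\beta\eta}\,\sigma_\lambda$.

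Since these three sections over $U$ coincide by the para-associative law, and the $\sigma_\lambda$ are a $C^\infty(U)$-basis, the coefficient of each $\sigma_\lambda$ must agree across the three expressions; this is exactly the claimed identity $C^\eta_{\alpha\beta\gamma}\,C^\lambda_{\eta\delta\epsilon} = C^\eta_{\delta\gamma\beta}\,C^\lambda_{\alpha\eta\epsilon} = C^\eta_{\gamma\delta\epsilon}\,C^\lambda_{\alpha\beta\eta}$. Matching coefficients is legitimate because the equality holds on all of $U$ for arbitrary values of the free indices, so it is a genuine identity of functions in $C^\infty(U)$.

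There is no serious obstacle here; the computation is routine once the two linearity inputs are in place. The only point demanding care is index bookkeeping: one must pull each inner structure function out using the correct slot of \eqref{eqn:ModStru}, and respect the reversal of the middle triple $(\delta,\gamma,\beta)$ coming from the non-trivial central term of the para-associative law, which is precisely what distinguishes this relation from its A-associative analogue.
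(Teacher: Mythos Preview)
Your argument is correct and is precisely the ``quick calculation'' the paper alludes to: evaluate the para-associative identity on basis sections $\sigma_\alpha,\dots,\sigma_\epsilon$, use the $C^\infty(M)$-linearity \eqref{eqn:ModStru} to extract the structure functions, and compare coefficients in the local frame. There is nothing to add; the index bookkeeping, including the reversed middle triple, is handled correctly.
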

\begin{example}
Any vector bundle can be considered a para-associative algebroid by defining the ternary product to be a zero map.
\end{example}
\begin{example}
Any real para-associative algebra can be considered as a para-associative algebroid over a point.
\end{example}
\begin{example}\label{exa:ParaAssAlgR2}
Consider the trivial vector bundle $E = M \times \R^2$. A global basis of sections is provided by any basis of $\R^2$. Thus, picking the standard basis $\{e_1,e_2\}$ we can generalise Example \ref{exm:ParaAssR2}  and provide $M \times \R^2$ with the structure of a para-associative algebroid. 
\end{example}
\begin{example}\label{exa:RiemMan1}
Let $(M,g)$ be a (pseudo-)Riemannian manifold. Then $\sT M$ is a para-associative algebroid with the ternary product being 
$$[X,Y,Z] := g(X,Y)Z\,,$$
for all $X,Y$ and $Z\in \Vect(M)$.  An isometry between two (pseudo-)Riemannian manifolds $(M,g)$ and $(N,h)$  is a diffeomorphism $\phi : M \rightarrow N$, such that $g(X,Y) = h(\phi_* X,\phi_* Y)$. Then a quick calculation shows that an isometry is also a para-associative algebroid homomorphism.
\end{example}
\begin{remark}
Example \ref{exa:RiemMan1} does not rely on the metric being non-degenerate. Thus, we can still construct a para-associative algebroid on the tangent bundle of a manifold with a `degenerate metric'.  For example, the tangent bundle of a Carrollian spacetime is a para-associative algebroid (see \cite{Duval:2014}).
\end{remark}
\begin{example}
Let $(M,g)$ be a (pseudo-)Riemannian manifold. Then $\sT^* M$ is a para-associative algebroid with the ternary product being 
$$[\omega,\eta,\sigma] := g^{-1}(\omega, \eta)\sigma\,,$$
for all $\omega, \eta$ and $\sigma \in \Omega^1(M)$. This ternary product could also have been defined using Example \ref{exa:RiemMan1} and the musical isomorphisms. 
\end{example}
\begin{example}\label{exm:AssAlgJordan}
Consider an associative algebroid, i.e., a vector bundle $\pi : E \rightarrow M$ whose sections form an associative $C^\infty(M)$-algebra. We will denote the binary product by  `dot'.   If the algebra satisfies the `Jordan-like' property $u\cdot v\cdot w = w\cdot v\cdot u$, for all $u,v,w \in \Sec(E)$, then 
$$[u,v,w] :=  u\cdot v\cdot w\,,$$
provides the structure of a commutative para-associative algebroid. For example, commutative associative algebroids are also para-associative algebroids with respect to the triple product. 
\end{example}
\begin{example}
If $E$ is a para-associative algebroid, then $E^{\mathrm{op}}$, defined as the same vector bundle but now with $[u,v,w]^{\textrm{op}} := [w,v,u]$, is also a para-associative algebroid. As a specific example, given the para-associative algebroid defined over a (pseudo-)Riemannian manifold, see Example \ref{exa:RiemMan1}, the opposite structure is
$$[X,Y,Z]^{\textrm{op}} := [Z,Y,X] = g(Z,Y)X\,.$$
\end{example}
\begin{coexample}
Let $(M, \omega)$ be an almost symplectic manifold. We can define a ternary product on $\Vect(M)$,
$$[X,Y,Z] := \omega(X,Y)Z\,,$$
for all $X,Y$ and $Z \in \Vect(M)$. However, the antisymmetry property $\omega(X,Y) = - \omega(Y,X)$, spoils the para-associative property. Thus, we do not obtain a para-associative algebroid structure on the tangent bundle of an almost symplectic manifold.
\end{coexample}
\begin{small}
\noindent \textbf{Aside:}
The definition of a para-associative algebroid (Definition \ref{def:ParaAlgBroid}), directly generalises to a para-associative superalgebroid. That is, a vector bundle $\pi : E \rightarrow M$ in the category of supermanifolds (see for example \cite{Carmeli:2011}) that comes with a para-associative $C^\infty(M)$-superalgbera on its sections.  The minor modifications are some signs:
\begin{equation*}
\big[ [u_1,u_2,u_3] , u_4,u_5 \big] = (-1)^{\widetilde{u_2}(\widetilde{u_3} +\widetilde{u_4})+ \widetilde{u_3} \,\widetilde{u_4}} \,\big [ u_1,[u_4,u_3,u_2],u_5\big] =  \big[ u_1,u_2,[u_3 , u_4,u_5] \big]\, ,
\end{equation*}
and
\begin{equation*}
 [fu, v, w] =(-1)^{\widetilde{f}\,\widetilde{u} }\, [u, fv, w] = (-1)^{\widetilde{f}(\widetilde{u}+ \widetilde{v})}\,[u, v, fw] = f[u, v, w]\,,
 \end{equation*}
for all homogeneous $u_i, u, v,w \in \Sec(E)$ and $f\in C^{\infty}(M)$ Here `tilde' represents the $\Z_2$-degree of the objects. Extension to non-homogeneous elements is via linearity. 
\begin{example}
Let $(M,g)$ be an even Riemannian supermanifold. Then $\sT M$ is a para-associative superalgebroid with the ternary product being 
$$[X,Y,Z] = g(X,Y)Z\,,$$
for all $X,Y$ and $Z\in \Vect(M)$.
\end{example}
\end{small}
\subsection{From Para-associative Algebroids to Associative Algebroids}
Given a semiheap, we can construct a semigroup by selecting an element of the semiheap. However, we do not have an equivalence of categories: different choices of an element can lead to non-isomorphic semigroups, and the reverse process of constructing a semiheap from an arbitrary semigroup may not exist. In the current setting, we have the following result.
\begin{proposition}\label{prop:ParaAssToAssAlg}
Let $(E, [-,-,-])$ be a para-associative algebroid. Then
$(E,u\star_e v)$ is an associative algebroid where
  $u\star_e v := [u,e,v]$, and $e \in \Sec(E)$ is a selected section. If the ternary product is commutative, then the binary product is commutative. Furthermore, if the chosen element $e$ is biunital, then the associative $C^\infty(M)$-algebra on $\Sec(E)$ is unital and the unit is $e$.
\end{proposition}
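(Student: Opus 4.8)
The plan is to verify the associative-algebroid axioms for the binary product $u\star_e v := [u,e,v]$ one at a time, and I expect each to reduce to a single application of the relevant hypothesis. First I would confirm that $\star_e$ equips $\Sec(E)$ with a $C^\infty(M)$-bilinear product. This is immediate from the module property \eqref{eqn:ModStru}: since $[fu,e,v]=f[u,e,v]$ and $[u,e,fv]=f[u,e,v]$, the product $\star_e$ is $C^\infty(M)$-linear in both arguments, while $\R$-bilinearity descends from the linearity of the ternary bracket in each entry. Consequently $(\Sec(E),\star_e)$ is a $C^\infty(M)$-bilinear binary algebra, so it only remains to check associativity.

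The heart of the matter is associativity, and here the key observation is that we need only the \emph{outer} equality of the para-associative law, namely that the first expression equals the third, and not the middle term with its reversed arguments. Substituting $x_1=u$, $x_2=e$, $x_3=v$, $x_4=e$, $x_5=w$ into
$$\big[[x_1,x_2,x_3],x_4,x_5\big] = \big[x_1,x_2,[x_3,x_4,x_5]\big]$$
yields exactly
$$(u\star_e v)\star_e w = \big[[u,e,v],e,w\big] = \big[u,e,[v,e,w]\big] = u\star_e(v\star_e w).$$
Thus associativity holds with no further work: the selected section $e$ simply occupies the two ``even'' slots of the bracket, and the para-associative axiom is precisely the statement that it may be slid from the inner to the outer position. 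This establishes that $(E,\star_e)$ is an associative algebroid.

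For the remaining two claims I would substitute the respective hypotheses directly. If the ternary product is commutative, i.e.\ $[x,y,z]=[z,y,x]$, then $u\star_e v=[u,e,v]=[v,e,u]=v\star_e u$, so $\star_e$ is commutative. If $e$ is a biunit, i.e.\ $[e,e,x]=[x,e,e]=x$ for all $x$, then $e\star_e u=[e,e,u]=u$ and $u\star_e e=[u,e,e]=u$, so $e$ is a two-sided unit for $\star_e$ and the associative $C^\infty(M)$-algebra is unital.

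I do not anticipate any genuine obstacle; the argument is essentially a matter of bookkeeping. The one subtlety worth flagging is conceptual rather than technical: associativity uses \emph{precisely} the outer identity of the para-associative axiom, so one should be careful not to inadvertently invoke the middle (argument-reversing) term, which plays no role in producing the associative product and would in fact introduce an unwanted reversal.
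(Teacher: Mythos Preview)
Your proof is correct and follows essentially the same approach as the paper: verify $C^\infty(M)$-bilinearity from \eqref{eqn:ModStru}, derive associativity from the outer equality of the para-associative law, and read off commutativity and unitality directly from the respective hypotheses. Your additional remark that only the outer identity is needed (not the middle, argument-reversing term) is a worthwhile observation that the paper does not make explicit.
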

\begin{proof} It is clear that the binary operation defined is $C^\infty(M)$-linear as the ternary product is. We need to establish the other properties. 
\begin{description}
\setlength\itemsep{1em}
\item[Associativity] Observe that  $(u\star_e v)\star_e w = [[u,e,v], e, w]$ and  $u\star_e( v\star_e w) = [u,e,[v, e, w]]$. Thus para-associativity implies associativity.
\item[Commutativity] Assuming that the ternary product is commutative then $u\star_e v = [u,e,v] = [v,e,u] = v\star_e u$.
\item[Unital] Assuming $e$ is biunital, we observe that $u\star_e e = [u,e,e]=u$ and $e\star_e u = [e,e,u]=u$.
\end{description}
\end{proof}
\begin{remark}
Proposition \ref{prop:ParaAssToAssAlg} does not imply an equivalence of categories of para-associative algebroids and associative algebroids. Although ternary and binary structures are related, they are not equivalent. 
\end{remark}
\begin{example}
Let $(M,g, T)$ be a spacetime, i.e., a Lorentzian manifold $(M,g)$, equipped with a nowhere vanishing timelike vector field $T$.  Then there is an associative $C^\infty(M)$-algebra on $\Vect(M)$ (see Example \ref{exa:RiemMan1}) where the binary product is given by
$$X\star_T Y := g(X,T)Y\,,$$
for all $X,Y \in \Vect(M)$.
\end{example}
\begin{proposition}
Let $(E,[-,-,-])$ be a para-associative algebroid. Furthermore, let $e,e'\in \Sec(E)$ be biunital sections. Then the associative $C^\infty(M)$-algebras $(\Sec(E), \star_e)$ and  $(\Sec(E), \star_{e'})$ are canonically isomorphic.
\end{proposition}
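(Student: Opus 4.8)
The plan is to exhibit an explicit isomorphism built solely from the two biunital sections and the ternary product, so that \emph{canonical} means \emph{given by a universal formula with no arbitrary choices}. Concretely, I would define $\phi : \Sec(E) \to \Sec(E)$ by $\phi(u) := [e',e,u]$ and propose $\psi(u) := [e,e',u]$ as its inverse. Both maps are manifestly $C^\infty(M)$-linear: linearity in the third slot together with the module property \eqref{eqn:ModStru} gives $\phi(fu+gv) = f\,[e',e,u] + g\,[e',e,v]$, and similarly for $\psi$, so each is a homomorphism of $C^\infty(M)$-modules before the algebra structures are considered.

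Next I would verify that $\phi$ and $\psi$ are mutually inverse. Applying para-associativity in the form $[x_1,x_2,[x_3,x_4,x_5]] = [[x_1,x_2,x_3],x_4,x_5]$ to move the inner bracket outward gives $\psi(\phi(u)) = [e,e',[e',e,u]] = [[e,e',e'],e,u]$; biunitality of $e'$ collapses $[e,e',e']$ to $e$, and then biunitality of $e$ gives $[e,e,u]=u$. The symmetric computation yields $\phi(\psi(u)) = u$. Hence $\phi$ is a $C^\infty(M)$-linear bijection, and in particular $\phi(e) = [e',e,e] = e'$ by biunitality of $e$, so the unit of $\star_e$ is carried to the unit of $\star_{e'}$.

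The heart of the argument is that $\phi$ intertwines the two products, i.e. $\phi(u \star_e v) = \phi(u) \star_{e'} \phi(v)$. On the left, $\phi([u,e,v]) = [e',e,[u,e,v]] = [[e',e,u],e,v]$ by para-associativity. On the right, $\phi(u)\star_{e'}\phi(v) = [[e',e,u],e',[e',e,v]]$, and the same identity rewrites this as $[[[e',e,u],e',e'],e,v]$; biunitality of $e'$ now collapses $[[e',e,u],e',e']$ to $[e',e,u]$, so both sides equal $[[e',e,u],e,v]$. This establishes the homomorphism property, and together with the previous paragraph shows that $\phi$ is an isomorphism of associative $C^\infty(M)$-algebras.

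The step I expect to be the real (if modest) obstacle is this last intertwining computation: the subtlety is that the ``wrong'' biunit $e'$ sitting in the middle slot of $\phi(u)\star_{e'}\phi(v)$ must be shown to disappear, which works only because para-associativity lets one migrate it next to a second copy of $e'$ where biunitality annihilates it. Everything else---linearity and the inverse identities---is direct book-keeping in the two defining axioms. Finally, since $\phi$ is expressed purely in terms of $e$, $e'$ and the ternary product, with no choice of trivialisation, local basis, or auxiliary section beyond the given biunits, the resulting isomorphism is canonical.
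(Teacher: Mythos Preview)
Your proof is correct and follows essentially the same strategy as the paper: exhibit an explicit $C^\infty(M)$-linear map built from the ternary product and the two biunits, then verify invertibility and the homomorphism property using only para-associativity and biunitality. The sole difference is cosmetic---the paper uses the ``right-handed'' map $\psi(u)=[u,e,e']$ with inverse $u\mapsto[u,e',e]$, whereas you use the mirror ``left-handed'' map $\phi(u)=[e',e,u]$; both are equally canonical and the verifications are symmetric.
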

\begin{proof}
We claim that the isomorphism is given by
\begin{align*}
\psi : \,& (\Sec(E), \star_e) \rightarrow (\Sec(E), \star_{e'})\\
& u \longmapsto [u, e , e']\,.
\end{align*}
This map is clearly $C^\infty(M)$-linear. A quick calculation
$$\psi(u \star_e v) = [[u,e,e'],[e,e,e'], [v, e,e']] = [[u,e,e'],e', [v, e,e']]= \psi(u) \star_{e'}\psi(v) \,, $$
shows that we have a homomorphism of associative $C^\infty(M)$-algebras. The inverse map is given by
\begin{align*}
\psi^{-1} : \,& (\Sec(E), \star_{e'}) \rightarrow (\Sec(E), \star_e)\\
& u \longmapsto [u, 'e , e']\,.
\end{align*}
The reader can check that this is the correct inverse and moreover, the condition that both $e$ and $e'$ be biunits is essential.
\end{proof}
\begin{example}
Consider $E = M \times \R^2$ with the para-associative algebroid structure given by Example \ref{exa:ParaAssAlgR2} (following Example \ref{exm:ParaAssR2}).  As both $e_1$ and $e_2$ are biunital elements (thought of as constant sections), setting $e = e_1$ and $e =e_2$ produces isomorphic associative $C^\infty(M)$-algebras. Explicitly,
$$u \star_{e_1} v = (u^1 v^1 +u^2 v^2)e_1 + (u^1 v^2 + u^2 v^1)e_2\,, \qquad u \star_{e_2} v =  (u^1 v^2 + u^2 v^1)e_1 + (u^1 v^1 +u^2 v^2)e_2 \,. $$ 
The binary products here are defined by the two representations of the cyclic group $C_2$ by the canonical basis of $\R^2$.  
\end{example}
\begin{coexample}
Let $(M, g)$ be a Riemannian manifold with $M$ being closed, connected and of Euler characteristic zero. On such manifolds, nowhere vanishing vector field exists and can be globally normalised, $g(X,X) = 1$. This normalisation is not possible for vector fields that have singularities.  Observe that a nowhere-vanishing normalised vector field is a left-biunit, i.e.,
$$[X,X, Y] = g(X,X)Y = Y\,.$$
However, such vector fields are not right-biunits, i.e.,
$$[Y, X,X]= g(Y,X)X \neq Y\,,$$
for all $Y \in \Vect(M)$. Thus, while we can construct associative $C^\infty(M)$-algebra for every nowhere vanishing normalised vector field, they are not all canonically isomorphic.
\end{coexample}
 \begin{small}
\noindent \textbf{Aside:} Given an associative algebroid, we can construct a totally intransitive Lie algebroid via the commutator of sections, i.e.,
$$[u,v] := uv - vu\,.$$
We can similarly start with a para-associative algebroid and consider the ternary commutator
$$\{u,v,w\} := [u,v,w] -[v,u,w] + [w,u,v]- [u,w,v]+ [v,w,u] - [w,v,u]\,.$$
However, the ternary commutator does not, in general, satisfy the Filippov-Jacobi identity. Thus we do not obtain a (non-trivial) totally transitive Filippov algebroid as defined by Grabowski and Marmo \cite{Grabowski:2000}. Examining Example \ref{exa:RiemMan1} and Example \ref{exm:AssAlgJordan}, we observe that the ternary commutator identically vanishes in these example.  
\end{small}
%
\subsection{New Para-associative Algebroids from Old}
\subsubsection*{Direct Sum of Para-associative algebroids}
Given two vector bundles $\pi_1 : E_1 \rightarrow M$ and $\pi_2 : E_2 \rightarrow M$ over the same base, the direct sum is defined as 
$$E_1 \oplus_M E_2 :=  \big \{(p,q)\in E_1 \times E_2 ~ |~ \pi_1(p)= \pi_2(q)  \big \}\,, $$
and the projection $\tau : E_1 \oplus_M E_2 \rightarrow M$ is defined as $\tau(p,q)= \pi_1(p) = \pi_2(q)$. At the level of sections $\Sec(E_1)\oplus_{C^\infty}\Sec(E_2) = \Sec(E_1)\times \Sec(E_2)$, as sets and the module structure is defined component wise
\begin{align*}
& (u_1, u_2)+ (v_1 , v_2) = (u_1 + v_1, u_2 + v_2)\,,\\
& f(u,v) = (fu,fv)\,,
\end{align*}
for all $u_i,u \in \Sec(E_1)$ and $v_i,v \in \Sec(E_2)$ and $f \in C^\infty(M)$. If $E_1$ and $E_2$ are para-associative algebroids, then $E_1 \oplus_M E_2$ is a para-associative algebroid where the ternary product is defined component-wise, i.e.,
$$[(u_1, u_2), (v_1, v_2), (w_1,w_2)] := ([u_1, v_1, w_1], [u_2, v_2, w_2] )\,.$$
\subsubsection*{Tensor Product of Para-associative Algebroids}
Unlike the direct sum construction, we need three vector bundles $\pi_1 : E_1 \rightarrow M$, $\pi_2 : E_2 \rightarrow M$ and $\pi_3 : E_3 \rightarrow M$ over the same base.  The triple tensor product is the vector bundle $\tau : E_1 \otimes E_2 \otimes E_3 \rightarrow M$, where the fibres are the tensor product of the component fibres, i.e., $(E_1 \otimes E_2 \otimes E_3)_m \simeq (E_1)_m \otimes (E_2)_m \otimes (E_3)_m $, for all $m \in M$. The projection is defined as $\tau(p\otimes q \otimes r) = \pi_1(p) = \pi_2(q) = \pi_3(r)$. At the level of sections
$$\Sec(E_1 \otimes E_2 \otimes E_3) \simeq \Sec(E_1)\otimes_{C^\infty(M)} \Sec(E_2)\otimes_{C^\infty(M)} \Sec(E_3)\,.$$
To spell this structure out, 
\begin{align*}
& u\otimes v \otimes (w_1+ w_2) = u \otimes v \otimes w_1 + u \otimes v \otimes w_2\,,\\
& u \otimes (v_1 + v_2) \otimes w = u \otimes v_1 \otimes w + u \otimes v_2 \otimes w\,,\\
& (u_1 +u_2)\otimes v \otimes w =  u_1 \otimes v \otimes w +  u_2 \otimes v \otimes w\,,
\end{align*}
and 
$$fu \otimes v \otimes w = u \otimes fv \otimes w =u \otimes v \otimes fw\,.$$
If $E_1, E_2$ and $E_3$ are para-associative algebroids then the tensor product $E_1 \otimes E_2 \otimes E_3$ is also para-associative algebroid with the ternary product being 
$$[(u_1 \otimes u_2 \otimes u_3),(v_1 \otimes v_2 \otimes v_3) , (w_1 \otimes w_2 \otimes w_3)] = [u_1, v_1, w_1]\otimes [u_2, v_2, w_2] \otimes [u_3, v_3, w_3]\,.$$
The para-associativity of each component ensures that the tensor product ternary product is para-associative.
%
\subsection{Local Trivialisations}
The definition of a para-associative algebroid does not say anything about a local trivialisation, i.e., each fibre para-associative algebra need not be isomorphic to some standard para-associative algebra. 
\begin{example}
Consider the trivial vector bundle  $\pi : (0,1)\times \mathit{V}\rightarrow (0,1)$, where $\mathit{V}$ is a finite dimensional vector space. We equip the sections of this vector bundle with a para-associative product
$$[u,v,w] := t B(u,v)w\,,$$
where $t$ is the coordinate on $(0,1)$ and $B$ is a symmetric bilinear form on $\mathit{V}$. We have a trivial vector bundle, but note that the fibre para-associative algebras are not all isomorphic. Specifically,  every para-associative algebra on $\mathit{V}_t$ for $0 < t< 1$ is isomorphic to the para-associative algebra on $\mathit{V}_1$, explicitly, $u \mapsto t^{-1/3}u$. However, the algebra on $\mathit{V}_0$ is the zero ternary algebra, which is not isomorphic to the para-associative algebra on $\mathit{V}_1$.
\end{example}
In this subsection, we will address the question of local triviality. We first propose the following definition by generalising the notion of an associative algebra bundle (see for example \cite{Canlubo:2019}).
\begin{definition} A \emph{para-associative algebra bundle} is a vector bundle $\pi  : E \rightarrow M$ with the structure of a para-associative algebra on each fibre $E_m$, $m \in M$, such that for each point of $M$ there exists an open neighbourhood $U\subseteq M$ and a para-associative algebra $\mathcal{A}$ and a diffeomorphism $\psi : \pi^{-1}(U) \stackrel{\sim}{\rightarrow} U \times \mathcal{A}$, that preserves the fibres and restricts to a para-associative algebra isomorphism $\psi_n : E_n \stackrel{\sim}{\rightarrow} \mathcal{A}$ in each fibre over  $n \in U$.
\end{definition}
Given a para-associative algebra bundle, we can define a para-associative algebroid in the standard way by defining point-wise operations. In particular, for three sections $u,v,w \in \Sec(E)$ we define the ternary product via
$$[u,v,w](m) :=  [u(m), v(m), w(m)]\in E_m \simeq \mathcal{A}\,,$$
for any $m \in M$. This construction is consistent with passing from the global algebra to the fibre algebras, see Proposition \ref{prop:FibWellDef}. \par 
\begin{lemma}\label{lem:IsoFibBun}
Let $(E, [-,-,-])$ be a para-associative algebroid over a connected base manifold $M$. If all the fibre para-associative algebras are isomorphic, then $\pi :E\rightarrow M$ is a para-associative algebra bundle. 
\end{lemma}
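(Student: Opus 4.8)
The plan is to extract local triviality, which is a \emph{coherent} pointwise condition, from the merely pointwise hypothesis of fibrewise isomorphism by straightening a smoothly-varying family of structure tensors with a smooth family of linear isomorphisms. Fix a point $m_0 \in M$. Since the algebra-bundle condition is local, I would begin with an ordinary vector-bundle trivialisation $\Phi : \pi^{-1}(U) \to U \times V$ over a connected open $U \ni m_0$, with $V$ the typical (finite-dimensional) fibre; this yields linear isomorphisms $\Phi_n : E_n \to V$ for each $n \in U$. Using $\Phi_n$ I transport each fibre product to $V$, obtaining a ternary product $T_n \in \mathrm{Hom}(V^{\otimes 3}, V)$ given by $T_n(a,b,c) := \Phi_n[\Phi_n^{-1}a,\Phi_n^{-1}b,\Phi_n^{-1}c]$. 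Because the structure functions $C^\lambda_{\alpha\beta\gamma}$ are smooth on $U$, the assignment $n \mapsto T_n$ is a smooth map into $\mathrm{Hom}(V^{\otimes 3}, V)$, and each $T_n$ is para-associative; by construction $\Phi_n : (E_n,[-,-,-]) \to (V,T_n)$ is an algebra isomorphism.

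The goal then reduces to producing a smooth family $g_n \in \mathrm{GL}(V)$, defined near $m_0$ with $g_{m_0} = \mathrm{id}$, intertwining $T_n$ with the fixed model $T_{m_0}$; for then $\psi_n := g_n^{-1}\circ\Phi_n$ is a fibrewise algebra isomorphism onto $\mathcal{A} := (V, T_{m_0})$ depending smoothly on $n$, and $\psi(\xi) = (n,\psi_n(\xi))$ assembles to the required trivialisation. I would organise this through the $\mathrm{GL}(V)$-action on the space of ternary products: let $\mathcal{P} \subset \mathrm{Hom}(V^{\otimes 3}, V)$ be the real algebraic variety cut out by the para-associativity identities, on which $g$ acts by $(g\cdot T)(a,b,c) = g\,T(g^{-1}a, g^{-1}b, g^{-1}c)$. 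Two points of $\mathcal{P}$ lie in one orbit exactly when the corresponding algebras are isomorphic, so the hypothesis that all fibres are isomorphic says precisely that the whole smooth family $\{T_n\}_{n\in U}$ lies in a single orbit $\mathcal{O} = \mathrm{GL}(V)\cdot T_{m_0}$.

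The decisive step is to lift the smooth curve $n \mapsto T_n$ through the orbit map $q : \mathrm{GL}(V) \to \mathcal{O}$, $g \mapsto g\cdot T_{m_0}$. Here I would invoke the standard fact that an orbit of the algebraic (hence smooth) $\mathrm{GL}(V)$-action is a locally closed, embedded smooth submanifold, and that $q$ factors as the principal $\mathrm{Aut}(V, T_{m_0})$-bundle projection followed by a diffeomorphism $\mathrm{GL}(V)/\mathrm{Aut}(V, T_{m_0}) \xrightarrow{\sim} \mathcal{O}$; in particular $q$ is a surjective submersion and admits a smooth local section $s$ near $T_{m_0}$ with $s(T_{m_0}) = \mathrm{id}$. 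Setting $g_n := s(T_n)$ — smooth in $n$ since $\mathcal{O}$ is embedded, so $n\mapsto T_n$ is smooth as a map into $\mathcal{O}$ — gives $g_n\cdot T_{m_0} = T_n$, which unwinds to $g_n^{-1}T_n(a,b,c) = T_{m_0}(g_n^{-1}a, g_n^{-1}b, g_n^{-1}c)$, i.e. $g_n^{-1} : (V,T_n) \to (V, T_{m_0})$ is an algebra isomorphism. After shrinking $U$ to the domain of the lift, $\psi_n = g_n^{-1}\circ\Phi_n$ completes the construction, and connectedness of $M$ lets the single model $\mathcal{A}$ serve everywhere.

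The main obstacle is exactly this lifting: pointwise isomorphism of the fibre algebras does not by itself yield a continuous, much less smooth, family of intertwining isomorphisms, and this coherent family is the entire content of the lemma. The honest resolution rests on the algebraicity of the $\mathrm{GL}(V)$-action, which guarantees that the isomorphism class is a locally closed homogeneous submanifold and that the orbit map is a principal bundle with local sections, so that the family $\{T_n\}$ — a priori only known to be pointwise conjugate to $T_{m_0}$ — can be straightened smoothly. Checking that $\mathcal{P}$ is a genuine variety and that $\mathcal{O}$ is embedded (so that smoothness into $\mathrm{Hom}(V^{\otimes 3},V)$ upgrades to smoothness into $\mathcal{O}$) is the technical heart; the remaining verifications are routine bookkeeping in the chosen trivialisation.
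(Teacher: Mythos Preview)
Your argument is correct and, in fact, more careful than the paper's. The paper's proof simply chooses, for each $m\in M$, an isomorphism $\lambda_m:E_m\to\mathcal{A}$, then composes the vector-bundle trivialisation $\psi$ with these fibrewise maps and declares the result $\bar\psi$ to be the required diffeomorphism. No argument is given that the family $\{\lambda_m\}$ can be chosen to depend smoothly on $m$; the paper only observes that each $\lambda_m$, being linear, is smooth on its individual fibre. This is exactly the ``main obstacle'' you single out, and you resolve it honestly by transporting the fibre products to a smooth family $T_n\in\mathrm{Hom}(V^{\otimes 3},V)$, recognising the hypothesis as the statement that this family sits in a single $\mathrm{GL}(V)$-orbit, and then lifting through the orbit map using that orbits of algebraic actions are locally closed embedded submanifolds with submersive orbit maps.

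What each approach buys: the paper's proof is short and conveys the shape of the construction, but it elides precisely the point that distinguishes a para-associative \emph{algebra bundle} from a vector bundle whose fibres happen to carry pairwise-isomorphic algebra structures. Your route is longer but actually delivers the smooth local trivialisation; the orbit/section mechanism is the standard way to promote pointwise conjugacy to coherent conjugacy, and it is the right tool here. One small remark: you might state explicitly that $\mathrm{Aut}(V,T_{m_0})$ is a closed (algebraic) subgroup of $\mathrm{GL}(V)$, so that the quotient is a smooth manifold and the orbit map is a genuine principal bundle; everything else is already in place.
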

\begin{proof}
Assume all the fibres are isomorphic as para-associative algebras.  Thus, for any $m \in M$, we can choose isomorphisms $\lambda_m: E_m \stackrel{\sim}{\rightarrow} \mathcal{A}$, for some fixed para-associative algebra $\mathcal{A}$. Note that these isomorphisms are linear and so smooth. From the vector bundle structure, there is an open neighbourhood of $m$, $U\subset M$, such that there is a diffeomorphism  $\psi: \pi^{-1}(U)\stackrel{\sim}{\rightarrow} U \times \mathit{V}$, where $\mathit{V}$ is a finite-dimensional vector space. This map restricts to a linear isomorphism $\psi_n : E_n \stackrel{\sim}{\rightarrow}\mathit{V}$, for all points $n \in U$.\par 
We can replace $\mathit{V}$ with $\mathcal{A}$ (which are obviously isomorphic as vector spaces) by defining 
$$\bar{\psi}_n := (\psi_n\circ \lambda_n)^{-1}\circ \psi_n : E_n \stackrel{\sim}{\rightarrow} \mathcal{A}\,,$$
which by construction is an isomorphism between the two para-associative algebras.  The diffeomorphism $\bar{\psi} : \pi^{-1}(U)\stackrel{\sim}{\rightarrow} U \times \mathcal{A}$ is given by point-wise composition with the isomorphism of fibres to $\mathcal{A}$. Specifically, if $\psi_m(p) = (m, \alpha) $, then $\bar{\psi}_m(p) = (m, \lambda_m(\alpha))$.
\end{proof}
In the associative setting, Canlubo \cite[Proposition 3]{Canlubo:2019} showed that the existence of a particular kind of linear connection on an associative algebroid implies local triviality. Here we will generalise this to the para-associative setting.
\begin{definition}\label{def:DiffCon}
Let $(E, [-,-,-])$ be a para-associative algebroid. A linear connection on the vector bundle $\pi:E \rightarrow M$, $\nabla : \Vect(M)  \times \Sec(E)\times \Sec(E) \rightarrow \Sec(E)$, is said to be a \emph{differential connection} if it satisfies
$$\nabla_X [u,v,w] =  [\nabla_X u,v,w] + [u,\nabla_X v,w] + [u,v,\nabla_X w]\,,$$
for all $X \in \Vect(M)$ and $u,v,w \in \Sec(E)$.
\end{definition}
The nomenclature `differential' in Definition \ref{def:DiffCon} is due to the connection satisfying the ternary Leibniz rule. \par 
Using local coordinates $x^a$, and defining as standard $\nabla_a\sigma_\alpha :=  \Gamma_{a \alpha}^\beta \sigma_\beta$, the differential condition is expressed as
$$\partial_a C_{\alpha\beta\gamma}^\lambda + C_{\alpha\beta\gamma}^\epsilon \Gamma_{a \epsilon}^\lambda =  \Gamma_{a \alpha}^\epsilon C_{\epsilon\beta\gamma}^\lambda +  \Gamma_{a \beta}^\epsilon C_{\alpha\epsilon\gamma}^\lambda +  \Gamma_{a \gamma}^\epsilon C_{\alpha\beta\epsilon}^\lambda\,.$$
\begin{remark}
The Riemannian curvature tensor is defined as 
$$R_\nabla(X,Y)(-) := [\nabla_X,\nabla_Y ](-) - \nabla_{[X,Y]}(-)\,.$$
A quick calculation shows that if the connection is a differential connection, then the Riemannian curvature tensor is a derivative, i.e.,
$$R_\nabla(X,Y)[u,v,w] = [R_\nabla(X,Y)u,v,w] + [u,R_\nabla(X,Y)v,w] + [u,v,R_\nabla(X,Y)w]\,.$$
\end{remark}
\begin{example}
By considering a para-associative algebra as a para-associative algebroid over a point, then a differential connection is precisely a ternary differential (see \cite{Carlsson:1976}). 
\end{example}
\begin{example}
Let $(M, g)$ be a (pseudo-)Riemannian manifold and let $\nabla$ be a metric connection, i.e., 
$$\nabla_X\big(g(Y,Z)\big) = g \big(\nabla_X Y, Z \big) + g \big( Y, \nabla_XZ \big)\,.$$
A quick calculation shows that 
$$\nabla_W[X,Y,Z] = [\nabla_W X,Y,Z] + [X,\nabla_W Y,Z]  + [X,Y,\nabla_W Z]\,,$$
for all $W, X,Y,Z \in \Vect(M)$. Thus, any metric connection is a differential connection. Moreover, one can directly observe that if a connection is a differential connection, then it is also metric compatible.  
\end{example}
\begin{lemma}\label{lem:DiffConIso}
Let $E$ be a para-associative algebroid over $M$. If the base manifold $M$ is connected and $E$ admits a differential connection $\nabla$, then the para-associative algebras on each fibre are isomorphic.
\end{lemma}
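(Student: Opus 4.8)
The plan is to use parallel transport associated with the differential connection $\nabla$ and to show that it intertwines the fibrewise ternary products, hence furnishes isomorphisms between the fibre para-associative algebras. Since a connected smooth manifold is path-connected, it suffices to produce, for any (piecewise) smooth curve $\gamma : [0,1] \rightarrow M$ with $\gamma(0) = m$ and $\gamma(1) = n$, a para-associative algebra isomorphism $E_m \stackrel{\sim}{\rightarrow} E_n$.

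First I would fix such a curve and recall that the linear connection determines a parallel transport \emph{linear} isomorphism $P_\gamma : E_m \rightarrow E_n$: given $\alpha \in E_m$ there is a unique parallel section $u$ of the pullback bundle $\gamma^* E$ along $\gamma$, i.e. a curve $u(t)\in E_{\gamma(t)}$ with $u(0)=\alpha$ and covariant derivative $\tfrac{D}{dt}u = \nabla_{\dot\gamma}u = 0$, and one sets $P_\gamma\alpha := u(1)$. Existence and uniqueness here are the standard linear ODE facts.

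The key step is to show $P_\gamma$ respects the ternary product. Given $\alpha,\beta,\delta \in E_m$, let $u,v,w$ be the parallel sections along $\gamma$ with initial values $\alpha,\beta,\delta$, and consider $t \mapsto [u,v,w](t) = [u(t),v(t),w(t)] \in E_{\gamma(t)}$, where the fibre product is the one from Proposition \ref{prop:FibWellDef}. Because the product is $C^\infty(M)$-linear in each slot, it is encoded by the structure functions $C^\lambda_{\alpha\beta\gamma}$, i.e. by a $(1,3)$-tensor field, and the differential condition of Definition \ref{def:DiffCon} is exactly the assertion that this tensor is covariantly constant. Differentiating along $\gamma$ and applying the differential (ternary Leibniz) rule gives
\[
\frac{D}{dt}[u,v,w] = \Big[\frac{D}{dt}u, v, w\Big] + \Big[u, \frac{D}{dt}v, w\Big] + \Big[u, v, \frac{D}{dt}w\Big] = 0,
\]
since $u,v,w$ are parallel. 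Thus $[u,v,w]$ is itself a parallel section along $\gamma$. Comparing values at $t=0$ and $t=1$ and invoking uniqueness of parallel transport yields $P_\gamma[\alpha,\beta,\delta] = [u,v,w](1) = [P_\gamma\alpha, P_\gamma\beta, P_\gamma\delta]$, which is precisely the homomorphism property; as $P_\gamma$ is a linear bijection, it is an isomorphism of para-associative algebras.

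Finally, connectedness of $M$ gives path-connectedness, so any two points are joined by a piecewise smooth curve, and the associated parallel transport provides the required isomorphism between the corresponding fibre algebras. I expect the only genuine obstacle to be the middle step: one must make precise that pulling the fibrewise operation back to $\gamma^* E$ preserves the differential identity, so that the covariant derivative of $[u,v,w]$ splits as displayed. Once the ternary product is recognised as a parallel $(1,3)$-tensor — equivalently, the differential condition is read componentwise as $\nabla C = 0$ — the remaining verifications are routine, and the result generalises the associative statement of Canlubo \cite{Canlubo:2019}.
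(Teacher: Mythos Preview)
Your proposal is correct and follows essentially the same approach as the paper: both use parallel transport along a path between two points, invoke the ternary Leibniz rule of the differential connection to conclude that $[u,v,w]$ is parallel whenever $u,v,w$ are, and deduce that parallel transport is a para-associative algebra isomorphism between fibres. Your additional remark interpreting the differential condition as $\nabla C = 0$ for the structure tensor is a helpful gloss but not a different strategy.
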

\begin{proof}
Assume $E$ comes equipped with a ternary connection $\nabla$. Let $m, n \in M$ be two arbitrary but distinct points, and let $c: I  \rightarrow M$ be a smooth path in $M$ such that $c(0)= m$ and $c(1) = n$.  We need connectedness to guarantee that any two points can be connected by a path. Given $\alpha, \beta,\gamma \in E_m$, parallel transport of these vectors is the unique extension to the vectors to parallel sections along $c$. That is we have sections $u,v$ and $w$ that satisfy 
\begin{align*}
& \nabla_{c'}u =0\,,& u(m)= \alpha\,,\\
& \nabla_{c'}v =0\,,& v(m)= \beta\,,\\
& \nabla_{c'}w =0\,,& u(m)= \gamma\,.\\
\end{align*}
By definition/construction $[\alpha, \beta, \gamma] = [u,v,w](m)$.  As we have a ternary connection 
$$\nabla_{c'} [u,v,w] =  [\nabla_{c'} u,v,w] + [u,\nabla_{c'} v,w] + [u,v,\nabla_{c'} w] = 0\,.$$
Thus, $[u,v,w]$ is a unique section along $c$, and it defines the parallel transport of $[\alpha,\beta, \gamma]$. The parallel transport operator $\Phi(c)_m^n : E_m \rightarrow E_n$ is a linear isomorphism. We need to show that it is a para-associative algebra morphism, so all we need to establish is that the ternary products are respected. Explicitly, we have 
\begin{align*}
& \Phi(c)_m^n(\alpha) :=u(n) =: \alpha'\,,\\ 
& \Phi(c)_m^n(\beta) :=v(n) =: \beta'\,,\\
& \Phi(c)_m^n(\gamma) :=w(n) =: \gamma'\,.
\end{align*}
Furthermore,
$$\Phi(c)_m^n([\alpha,\beta,\gamma]) = [u,v,w](m) \stackrel{\mathrm{def}}{=} [\alpha',  \beta', \gamma'] = [\Phi(c)_m^n(\alpha), \Phi(c)_m^n(\beta), \Phi(c)_m^n(\gamma)]\,.
$$
Thus, $\Phi(c)_m^n$ is an isomorphism of para-associative algebras.  Thus, for any and all pairs of distinct points $m$ and $n \in M$, we have an isomorphism of the fibre para-associative algebras. 
\end{proof}
\begin{remark}
If the base manifold $M$ is not connected, then Lemma \ref{lem:DiffConIso} needs slight modification to say that all the fibre para-associative algebras over each connected component are isomorphic. 
\end{remark}
\begin{theorem}\label{thm:DiffConBundle}
Let $E$ be a para-associative algebroid over $M$. If the base manifold $M$ is connected and if $E$ can equipped with a differential connection $\nabla$, then $E$ is also a para-associative algebra bundle.
\end{theorem}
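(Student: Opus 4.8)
The plan is to obtain the conclusion by chaining the two preceding lemmas, since together they already package the entire geometric content of the theorem. First I would observe that the hypotheses of Lemma \ref{lem:DiffConIso} are met verbatim: $M$ is connected and $E$ carries a differential connection $\nabla$ (Definition \ref{def:DiffCon}). Applying that lemma yields that all the fibre para-associative algebras $E_m$ are pairwise isomorphic, the isomorphisms being the parallel transport operators $\Phi(c)_m^n$ along smooth paths; these respect the ternary product precisely because $\nabla_{c'}[u,v,w]=0$ whenever $u,v,w$ are parallel sections, which is the ternary Leibniz rule applied to a triple of parallel fields.

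Having established fibrewise isomorphism, the second and final step is simply to invoke Lemma \ref{lem:IsoFibBun}: a para-associative algebroid over a connected base all of whose fibre algebras are isomorphic is a para-associative algebra bundle. Concretely, fixing a model algebra $\mathcal{A}$ together with fibre isomorphisms $\lambda_m : E_m \stackrel{\sim}{\rightarrow} \mathcal{A}$, one composes the purely linear local vector-bundle trivialisation $\psi : \pi^{-1}(U)\stackrel{\sim}{\rightarrow} U \times \mathit{V}$ with these isomorphisms to produce a trivialisation $\bar{\psi} : \pi^{-1}(U)\stackrel{\sim}{\rightarrow} U \times \mathcal{A}$ that restricts to a para-associative algebra isomorphism on each fibre, which is exactly what the definition of a para-associative algebra bundle demands. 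The theorem is then immediate.

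The step I expect to carry the real weight is the second one, and specifically the smoothness requirement hidden inside it. Lemma \ref{lem:DiffConIso} only guarantees an abstract, pointwise isomorphism of each fibre with the fixed $\mathcal{A}$; it does not by itself produce a fibre-preserving \emph{diffeomorphism} $\pi^{-1}(U)\to U\times\mathcal{A}$. The content one must not gloss over is that these abstract fibre isomorphisms can be upgraded to genuine smooth local trivialisations of the vector bundle. This is precisely why the underlying vector-bundle structure (hence the existence of the linear local trivialisation $\psi$) is indispensable in Lemma \ref{lem:IsoFibBun}, and why it is important that the algebra isomorphisms $\lambda_m$ are linear and therefore automatically smooth. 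I would accordingly devote most of the write-up to confirming that the composite $\bar{\psi}$ is simultaneously a smooth vector-bundle trivialisation and a fibrewise para-associative algebra isomorphism; once that is in hand, the statement follows with no further calculation.
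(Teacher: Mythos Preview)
Your proposal is correct and follows essentially the same route as the paper: invoke Lemma~\ref{lem:DiffConIso} to obtain pairwise isomorphic fibre algebras, then feed this into Lemma~\ref{lem:IsoFibBun} to conclude local triviality. Your additional remarks about the smoothness of the resulting trivialisation are a worthwhile elaboration on a point the paper's proof of Lemma~\ref{lem:IsoFibBun} handles rather briskly, but they do not alter the structure of the argument for the theorem itself.
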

\begin{proof} Assume that $E$ is equipped with a differential connection. Lemma \ref{lem:DiffConIso} tells us that if we have a differential connection on $E$, then the fibre para-associative algebras are all isomorphic. Lemma \ref{lem:IsoFibBun} establishes that if fibre para-associative algebras are isomorphic, then we have a para-associative algebra bundle.
\end{proof}
The existence of a differential connection is a sufficient condition for a para-associative algebroid to be locally trivial, i.e., a para-associative algebra bundle. If we can construct a differential connection on an arbitrary para-associative algebra bundle, then we have a necessary condition. Of course, any vector bundle can be equipped with a linear connection, the question is if we can construct a differential connection.\par 
Consider the trivial case $E = M \times \mathcal{A}$. Sections are of the form $u = u^\alpha \sigma_\alpha$, where $u^\alpha \in C^\infty(M)$ and $\sigma_\alpha$ is a basis of $\mathcal{A}$. The ternary product is given by
$$[u,v,w] =  u^\alpha v^\beta w^\gamma \, C^\lambda_{\alpha \beta \gamma}\sigma_\lambda\,,$$
where $C^\lambda_{\alpha \beta \gamma}$ is constant (we have the same para-associative algebra for every point). We define the trivial connection via $\nabla_X \sigma_\alpha =0$, for all $X \in \Vect(M)$. A key observation is that this trivial connection is a differential connection, i.e., 
$$\nabla_X [u,v,w] =  [\nabla_X u,v,w] + [u,\nabla_X v,w] + [u,v,\nabla_X w]\,.$$
\begin{proposition}\label{prop:ParaBunConAlways}
Let $\pi : E\rightarrow M$ be a para-associative algebra bundle over a connected base $M$. Then $E$, considered as a para-associative algebroid, always admits a differential connection. 
\end{proposition}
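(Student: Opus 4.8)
The plan is to build a global differential connection by gluing together local ones with a partition of unity, exploiting the fact already noted above that the trivial connection on $M \times \mathcal{A}$ is differential. First I would fix a locally finite open cover $\{U_i\}_{i\in I}$ of $M$ by trivialising neighbourhoods, so that for each $i$ there is a fibrewise para-associative algebra isomorphism $\psi_i : \pi^{-1}(U_i) \stackrel{\sim}{\rightarrow} U_i \times \mathcal{A}$. On $U_i \times \mathcal{A}$ the structure functions $C^\lambda_{\alpha\beta\gamma}$ are constant, so the trivial connection $\nabla^{\mathrm{triv}}$ (defined by $\nabla^{\mathrm{triv}}_X \sigma_\alpha = 0$ on a basis of $\mathcal{A}$) is differential. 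Pulling this back along $\psi_i$ defines a connection $\nabla^{(i)}$ on $E|_{U_i}$ by $\nabla^{(i)}_X u := (\psi_i^\#)^{-1}\big(\nabla^{\mathrm{triv}}_X(\psi_i^\# u)\big)$.

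The first thing to check is that each $\nabla^{(i)}$ is again differential. Since $\psi_i^\#$ and its inverse are para-associative algebra homomorphisms, the bracket passes through both, and the differential identity for $\nabla^{\mathrm{triv}}$ transports verbatim: one computes $\nabla^{(i)}_X[u,v,w] = (\psi_i^\#)^{-1}\nabla^{\mathrm{triv}}_X[\psi_i^\# u, \psi_i^\# v, \psi_i^\# w]$, expands by the Leibniz rule for $\nabla^{\mathrm{triv}}$, and reassembles using that $(\psi_i^\#)^{-1}$ is a homomorphism. This is a routine but essential bookkeeping step.

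Next, take a partition of unity $\{\rho_i\}_{i\in I}$ subordinate to the cover, and set $\nabla := \sum_i \rho_i \nabla^{(i)}$, with each summand $\rho_i \nabla^{(i)}$ extended by zero off $U_i$. That this is a genuine linear connection is standard: the terms $X(f)$ appearing in the Leibniz rule combine with the weight $\sum_i \rho_i = 1$. The main point — the step I expect to carry the real content — is that $\nabla$ remains differential. Here I would invoke the $C^\infty(M)$-linearity of the ternary product, equation \eqref{eqn:ModStru}: because each $\rho_i$ is a smooth function and the bracket is linear over $C^\infty(M)$ in every slot, the weights can be pulled inside the bracket, giving
\[
[\nabla_X u, v, w] = \Big[\textstyle\sum_i \rho_i \nabla^{(i)}_X u,\, v,\, w\Big] = \sum_i \rho_i\, [\nabla^{(i)}_X u, v, w],
\]
and similarly for the other two slots. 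Summing the three contributions and using that each $\nabla^{(i)}$ is differential recombines the right-hand side into $\sum_i \rho_i \nabla^{(i)}_X[u,v,w] = \nabla_X[u,v,w]$, which is exactly the condition of Definition \ref{def:DiffCon}.

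The crux is thus not the existence of local differential connections — which is immediate from local triviality and the differentiality of $\nabla^{\mathrm{triv}}$ — but the fact that convex combinations preserve the differential property; this works precisely because the ternary product is $C^\infty(M)$-multilinear, so partition-of-unity functions commute with the bracket. I would remark that connectedness of $M$ plays no real role in this particular argument; it is inherited from the ambient hypotheses of the section.
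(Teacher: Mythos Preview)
Your proposal is correct and follows essentially the same approach as the paper: take local trivial connections on a trivialising cover and glue via a partition of unity, noting that the differential property survives because the ternary product is $C^\infty(M)$-multilinear. You have simply supplied more detail than the paper's brief sketch, including the explicit verification of the Leibniz rule for the glued connection and the (correct) observation that connectedness is not actually used here.
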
 
\begin{proof} The proof follows the standard arguments of constructing a connection. Briefly, given a trivialising cover $\{U_i\}_{i \in \mathcal{I}}$, on each $\pi^{-1}(U_i)\stackrel{\sim}{\rightarrow}  U_i  \times \mathcal{A}$ we take the local connection to be the trivial connection, $\nabla^i$.  Then the connection is defined as $\nabla :=  \sum \rho_i \nabla^i$, where $\sum  \rho_i$ is an appropriately chosen locally finite partition of unity. The differential property follows as each local connection is a differential connection.
\end{proof}
Theorem \ref{thm:DiffConBundle} together with Proposition \ref{prop:ParaBunConAlways}, led to the following.
\begin{observation}\label{obs:SuffNessCon}
The existence of a differential connection on a para-associative algebroid is a necessary and sufficient condition for local triviality. 
\end{observation}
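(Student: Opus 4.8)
The plan is to recognise that this Observation is precisely the combination of the two implications already established earlier, so the proof amounts to assembling them into a single biconditional. First I would pin down what \emph{local triviality} means here: a para-associative algebroid $(E,[-,-,-])$ over a connected base $M$ is locally trivial exactly when it is a para-associative algebra bundle in the sense of the definition preceding Lemma \ref{lem:IsoFibBun}, i.e.\ when each point of $M$ admits a trivialising neighbourhood on which the fibrewise ternary algebra is identified with a fixed model algebra $\mathcal{A}$. With that reading fixed, the two directions of the biconditional correspond exactly to the two results already in hand.

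For the \emph{sufficiency} direction I would simply invoke Theorem \ref{thm:DiffConBundle}: if $E$ carries a differential connection $\nabla$, then Lemma \ref{lem:DiffConIso} transports the fibre algebras along parallel transport to make them all isomorphic, and Lemma \ref{lem:IsoFibBun} upgrades these fibrewise isomorphisms to an honest bundle trivialisation. Hence $E$ is a para-associative algebra bundle, and no new argument is needed.

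For the \emph{necessity} direction I would invoke Proposition \ref{prop:ParaBunConAlways}: starting from a para-associative algebra bundle, pick a trivialising cover $\{U_i\}$, equip each $\pi^{-1}(U_i)\simeq U_i\times\mathcal{A}$ with its flat trivial connection $\nabla^i$ (which is differential because the structure functions $C^\lambda_{\alpha\beta\gamma}$ are constant on each patch), and glue via a locally finite partition of unity $\{\rho_i\}$ to form $\nabla=\sum_i\rho_i\nabla^i$. The ternary Leibniz rule survives this gluing because the defining identity is $C^\infty(M)$-linear in the connection, so a convex combination of differential connections is again differential.

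The two directions together yield the biconditional on a connected base. The one genuine subtlety I would flag --- and the only point requiring care --- is the connectedness hypothesis inherited from Lemma \ref{lem:DiffConIso} and Theorem \ref{thm:DiffConBundle}: parallel transport only forces the fibre algebras along a \emph{path} to be isomorphic, so on a disconnected base the equivalence must be read component-by-component, exactly as in the remark following Lemma \ref{lem:DiffConIso}. Beyond checking that the partition-of-unity sum indeed respects the ternary Leibniz rule, no substantive computation remains, so I do not expect any real obstacle here.
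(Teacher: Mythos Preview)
Your proposal is correct and matches the paper's own justification exactly: the Observation is stated immediately after Theorem \ref{thm:DiffConBundle} and Proposition \ref{prop:ParaBunConAlways} as their direct combination, with no further argument given. Your write-up in fact supplies more detail than the paper does (the partition-of-unity linearity check and the connectedness caveat), but the underlying logic is identical.
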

\begin{example}
Let $(M,g)$ be a semi-pseudo-Riemannian manifold, i.e., we allow the metric $g$ to be of non-constant signature. Note that if the signature changes, the metric will be degenerate over some region(s) of $M$, and so cannot have constant rank. None-the-less, we can construct a para-associative algebroid structure on $\sT M$ following the pseudo-Riemannian case, i.e.,
$$[X,Y,Z] := g(X,Y)Z\,,$$
for all $X,Y,Z\in \Vect(M)$. The question of local triviality can be addressed. Assuming that $g$ does indeed change signature across $M$, then \cite[Theorem 3.1]{Iliev:2002} tells us that there are no (globally defined) affine connections that are metric compatible. Thus, we conclude that we do not have a locally trivial structure, i.e., we do not have a para-associative algebra bundle, as we cannot construct a differential connection. 
\end{example}
\begin{small}
\noindent \textbf{Aside:} semi-pseudo-Riemannian manifolds are motivated by quantum gravity and cosmology.  The use of metrics without a fixed signature can be traced back to atleast Hartle \& Hawking \cite{Hartle:1983}, and more explicitly Sakharov \cite{Sakharov:1984}. While there is a lot to say here, and the literature has significantly grown since the 1980s, details lie outside the scope of this note. 
\end{small}
%
%
\section*{Acknowledgements}  
The author thanks Steven Duplij and Janusz Grabowski  for their help and encouragement.

%


\begin{thebibliography}{10}
\begin{small}

\bibitem{Abramov:2009}
Abramov, V.,  Kerner, R.,  Liivapuu, O. \&  Shitov, S., 
Algebras with ternary law of composition and their realization by cubic matrices, \href{https://doi.org/10.4303/jglta/S090201}{\emph{J. Gen. Lie Theory Appl.}} \textbf{3} (2009), no. 2, 77--94. 

\bibitem{Baer:1929}
Baer, R., Zur Einführung des Scharbegriffs, \href{https://doi.org/10.1515/crll.1929.160.199}{\emph{J. Reine Angew. Math.}} \textbf{160} (1929), 199--207.

\bibitem{Bazunova:2004}
Bazunova, N., Borowiec, A. \&  Kerner, R.,
Universal differential calculus on ternary algebras, \href{https://doi.org/10.1023/B:MATH.0000035030.12929.cc}{\emph{Lett. Math. Phys.}} \textbf{67} (2004), no. 3, 195--206.

\bibitem{Bruce:2022}
Bruce, A.J.,
Semiheaps and Ternary Algebras in Quantum Mechanics Revisited, \href{https://doi.org/10.3390/universe8010056}{\emph{Universe}} \textbf{8}(1) (2022), 56.

\bibitem{Canlubo:2019}
Canlubo, C.R.,
Weak Algebra Bundles and Associator Varieties,
\href{https://philjournalsci.dost.gov.ph/past-issues-1/87-vol-148-no-2-june-2019/1000-weak-algebra-bundles-and-associator-varieties}{\emph{Philipp J. Sci.}} \textbf{148}(2), (2019), 309--316.

\bibitem{Carlsson:1976}
Carlsson, R.,
Cohomology of associative triple systems
\href{https://doi.org/10.2307/2041098}{\emph{Proc. Am. Math. Soc.}} \textbf{60} (1976), 1--7. 


\bibitem{Carmeli:2011}
Carmeli, C., Caston, L. \& Fioresi, R.,  \href{http://www.ems-ph.org/books/book.php?proj_nr=132}{Mathematical foundations of supersymmetry}, EMS Series of Lectures in Mathematics, \emph{European Mathematical Society} (EMS), Z\"{u}rich, 2011, xiv+287 pp.

\bibitem{Dörnte:1928}
Dörnte, W., Untersuchungen über einen verallgemeinerten Gruppenbegriff,  \href{https://doi.org/10.1007/BF01180515}{\emph{Math Z}},  \textbf{29} (1928), pp. 1--19.

\bibitem{Duplij:2022}
Duplij, S.,
\href{https://iopscience.iop.org/book/mono/978-0-7503-2648-3}{Polyadic algebraic structures}, Bristol,
IOP Publishing, xxiii, 461 pp (2022).

\bibitem{Duval:2014}
Duval,C., Gibbons, G.W.,  Horvathy P.A. \&  Zhang, P.M., Carroll versus Newton and Galilei: two
dual non-Einsteinian concepts of time, \href{https://doi.org/10.1088/0264-9381/31/8/085016}{\emph{Class. Quant. Grav.}} \textbf{31} (2014) 085016.


\bibitem{Grabowski:2000}
Grabowski, J., \& Marmo, G.,
On Filippov algebroids and multiplicative Nambu-Poisson structures, \href{https://doi.org/10.1016/S0926-2245(99)00042-X}{\emph{Differ. Geom. Appl.}} \textbf{12}(1) (2000), 35--50. 

\bibitem{Grabowski:2001}
Grabowski, J., \& Marmo, G.,
Jacobi structures revisited,
 \href{https://doi.org/10.1088/0305-4470/34/49/316}{\emph{J. Phys. A: Math. Gen.}} \textbf{34} (2001), 10975

\bibitem{Hartle:1983}
Hartle, J.B. \& Hawking, S.W.,
Wave function of the universe
\href{https://doi.org/10.1103/PhysRevD.28.2960}{\emph{Phys. Rev. D}} \textbf{28} (1983), No. 12, 2960--2975. 

\bibitem{Hestenes:1962}
Hestenes, M.R., 
A ternary algebra with applications to matrices and linear transformations,
\href{https://doi.org/10.1007/BF00253936}{\emph{Arch. Rational Mech. Anal.}} \textbf{11} (1962), 138--194.

\bibitem{Hollings:2017}
Hollings, C.D. \& Lawson, M.V., \href{https://doi.org/10.1007/978-3-319-63621-4}{Wagner’s theory of generalised heaps}, Springer, Cham, 2017. xv+189 pp. ISBN: 978-3-319-63620-7.

\bibitem{Ibáñez:1999}
Ibáñez, R., de León, M., Marrero, J.C., \& Padrón, E.,
Leibniz algebroid associated with a Nambu-Poisson structure,
\href{https://doi.org/10.1088/0305-4470/32/46/310}{\emph{J. Phys. A, Math. Gen.}} \textbf{32} (1999), No. 46, 8129--8144. 

\bibitem{Iliev:2002}
Iliev, B. Z.,
On metric-connection compatibility and the signature change of space-time,
\href{https://doi.org/10.1238/Physica.Regular.066a00401}{\emph{Phys. Scr.}} \textbf{66}(6) (2002), 401--409. 

 \bibitem{Kerner:2008}
Kerner, R.,
Ternary and non-associative structures,
\href{https://doi.org/10.1142/S0219887808003326}{\emph{Int. J. Geom. Methods Mod. Phys.}} \textbf{5} (2008), no. 8, 1265--1294. 
		
		
\bibitem{Kerner:2018}
Kerner, R., 
Ternary generalizations of graded algebras with some physical applications, \href{http://imar.ro/journals/Revue_Mathematique/pdfs/2018/2/4.pdf}{\emph{Rev. Roumaine Math. Pures Appl.} }\textbf{63} (2018), no. 2, 107--141.

\bibitem{Liu:1997}
Liu, Z., Weinstein, A., \& Xu, P.,
Manin triples for Lie bialgebroids,
\href{https://doi.org/10.4310/jdg/1214459842}{\emph{J. Differ. Geom.}} \textbf{45}(3) (1997), 547--574.

\bibitem{MacLane:1988}
Mac Lane, S.,
\href{https://doi.org/10.1007/978-1-4612-9839-7}{Categories for the working mathematician}, 2nd ed.
Graduate Texts in Mathematics, 5, New York, NY: Springer. xii, 314 p, (1998). 

\bibitem{Pradines:1967}
Pradines, J.,
Théorie de Lie pour les groupoides différentiables. Calcul différentiel dans la categorie des groupoides infinitesimaux, 
\href{https://gallica.bnf.fr/ark:/12148/bpt6k6435215p/f259.image}{\emph{C. R. Acad. Sci., Paris, Sér. A}} \textbf{264} (1967), 245--248. 

\bibitem{Prufer:1924}
Pr\"{u}fer, H.,
Theorie der Abelschen Gruppen,
\href{https://doi.org/10.1007/BF01188079}{\emph{Math. Z.}} \textbf{20} (1924), no. 1, 165--187.

\bibitem{Sakharov:1984}
Sakharov A.,
Cosmological transitions with changes in the signature of the metric, \emph{Zh. Eksp. Teor. Fiz.} \textbf{87} (1984), 375--383.

\bibitem{Zettl:1983}
Zettl, H.,  
A characterization of ternary rings of operators,
\href{https://doi.org/10.1016/0001-8708(83)90083-X}{\emph{Adv. in Math.}} \textbf{48} (1983), no. 2, 117--143.

 

\end{small}
\end{thebibliography}
\end{document}